\documentclass{amsart}

\usepackage{amsmath,amssymb,amsthm}
\usepackage{hyperref}

\newtheorem{theorem}{Theorem}[section]

\newtheorem{prop}[theorem]{Proposition}
\newtheorem{cor}[theorem]{Corollary}

\theoremstyle{definition}
\newtheorem{definition}[theorem]{Definition}
\newtheorem{remark}[theorem]{Remark}
\newtheorem{problem}[theorem]{Problem}
\newtheorem{example}[theorem]{Example}

\title[A residually finite analogue of Kegel's theorem]
{A residually finite analogue of Kegel's theorem on splitting automorphisms}

\author{Alfonso DI BARTOLO}
\address{Dipartimento di Matematica e Informatica, Università degli Studi di Palermo, Via Archirafi 34, 90123 Palermo, Italy}
\email{alfonso.dibartolo@unipa.it}

\author{Kıvanç ERSOY}
\address{Institut für Mathematik, Freie Universität Berlin, Arnimallee 7, 14195 Berlin, Germany}
\email{ersoy@zedat.fu-berlin.de}

\author{Giovanni FALCONE}
\address{Dipartimento di Matematica e Informatica, Università degli Studi di Palermo, Via Archirafi 34, 90123 Palermo, Italy}
\email{giovanni.falcone@unipa.it}

\keywords{residually finite group, periodic group, splitting automorphism, Tarski monster}
\subjclass[2020]{Primary 20E36; 20F50; Secondary 20D45; 20F28}

\begin{document}

\begin{abstract}
Thompson proved that every finite group admitting a fixed-point-free automorphism of prime order is nilpotent, and Kegel showed that the same conclusion holds for finite groups admitting a splitting automorphism of prime order. Motivated by these results, Sozutov asked whether a \(p'\)-group admitting a splitting automorphism of prime order is locally nilpotent if
\[
\langle g, g^\varphi, \dots, g^{\varphi^{p-1}} \rangle
\]
is nilpotent for every \(g \in G\) \cite[Problem 10.59]{kourovka21}. We prove that if \(G\) is a periodic residually finite $p'$-group admitting a splitting automorphism of prime order \(p\) then \(G\) is nilpotent of class bounded in terms of \(p\). This gives an affirmative answer, for residually finite groups, to the problem of Sozutov. We also prove that a possible counterexample to Sozutov's problem cannot be a Tarski monster.
\end{abstract}

\maketitle

\section{Introduction}

Let \(G\) be a group and let \(\alpha \in \operatorname{Aut}(G)\). An automorphism \(\alpha\) is called \emph{fixed-point-free} if \(C_G(\alpha)=1\). For finite groups, every fixed-point-free automorphism \(\alpha\) of order \(n\) satisfies
\[
x x^\alpha x^{\alpha^2}\cdots x^{\alpha^{n-1}}=1
\]
for all \(x\in G\) (see \cite[10.5.1]{Robinson}). This led Gorchakov to introduce the notion of a splitting automorphism.

\begin{definition}
Let \(G\) be a group. An automorphism \(\alpha\) of order \(n\) is called a \emph{splitting automorphism} if
\[
g g^\alpha g^{\alpha^2}\cdots g^{\alpha^{n-1}}=1
\]
for every \(g\in G\).
\end{definition}

An element \(a \in G\) is called an \textbf{anticentral element} of \(G\) if
\[
aG' = a^G
\]
(see \cite{Ladisch2008} or \cite{Ersoy12}). It is easy to see that if \(a \in G\) is anticentral, then conjugation by \(a\) induces a splitting automorphism of \(G'\). Moreover, if \(\alpha\) is a fixed-point-free automorphism of a finite group \(G\), then \(\alpha\) is an anticentral element of the semidirect product
\[
H = G \rtimes \langle \alpha \rangle.
\]
Rowley observed that every finite group admitting a fixed-point-free automorphism is solvable \cite[Theorem]{Rowley}. Later, Ladisch proved that every finite group containing an anticentral element is solvable (see \cite{Ladisch2008}). On the other hand, \cite{Ersoy12} provides examples of finite non-solvable groups admitting splitting automorphisms. Indeed, the second author proved in \cite{Ersoy2016} that
a finite group admitting a splitting automorphism of odd order is solvable. In addition, Jabara proved in \cite{Jabara96} that every finite group admitting a splitting automorphism of order \(4\) is solvable.
For nilpotency, the situation is more restrictive.
The existence of splitting automorphisms imposes strong restrictions on the structure of a group. Thompson proved that every finite group admitting a fixed-point-free automorphism of prime order is nilpotent \cite{Thompson59}, while Kegel showed that every finite group admitting a splitting automorphism of prime order is nilpotent \cite{Kegel61}. Moreover, by results of Higman \cite{higman} and Kreknin--Kostrikin \cite{krekkost}, there exists a function \(f\) such that every finite group admitting a fixed-point-free automorphism of prime order \(p\) is nilpotent of class at most \(f(p)\).

Motivated by these results, Sozutov asked the following question.

\begin{problem}{\cite[Sozutov, Problem 10.59]{kourovka21}}\label{prob:sozutov}
Is a \(p'\)-group \(G\) locally nilpotent if it admits a splitting automorphism \(\varphi\) of prime order \(p\) such that all subgroups of the form
\[
\langle g,g^\varphi,\dots,g^{\varphi^{p-1}}\rangle
\]
are nilpotent?
\end{problem}

Our first main result gives a positive answer in the periodic residually finite case, and in fact implies bounded nilpotency class.

\begin{theorem}\label{thm:bounded-class} There is a natural valued function $f$ such that
if \(G\) is a periodic residually finite $p'$-group admitting a splitting automorphism
\(\varphi\) of prime order \(p\)
then \(G\) is nilpotent of class at most \(f(p)\).
\end{theorem}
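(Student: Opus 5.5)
Since \(G\) is residually finite and nilpotency of class at most \(c\) is equivalent to the vanishing of the commutator word \([x_1,\dots,x_{c+1}]\), it suffices to show that every finite quotient of \(G\) by a \(\varphi\)-invariant normal subgroup is nilpotent of class at most \(f(p)\). The plan is to pass to such quotients and then use the known finite theory.

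\textbf{Reduction to finite quotients.} Given \(1\neq x\in G\), choose a normal subgroup \(N\) of finite index with \(x\notin N\). Then \(M=\bigcap_{i=0}^{p-1}N^{\varphi^i}\) is a \(\varphi\)-invariant normal subgroup of finite index avoiding \(x\). The automorphism \(\varphi\) induces an automorphism \(\bar\varphi\) of \(\bar G=G/M\). Its order divides \(p\), and the splitting identity \(\bar g\bar g^{\bar\varphi}\cdots\bar g^{\bar\varphi^{p-1}}=1\) passes to \(\bar G\). If \(\bar\varphi\) is trivial, the identity reads \(\bar g^p=1\). Since \(\bar G\) is a \(p'\)-group (periodic \(p'\) means every element has order prime to \(p\)), this forces \(\bar G=1\). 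Hence we may assume \(\bar\varphi\) has order exactly \(p\). It therefore suffices to bound the class of a finite \(p'\)-group \(\bar G\) with a splitting automorphism of order \(p\). Periodicity is used only to make the quotients \(p'\)-groups.

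\textbf{Finite case: splitting implies fixed-point-free.} Let \(H\) be a finite \(p'\)-group with splitting automorphism \(\alpha\) of order \(p\), and let \(c\in C_H(\alpha)\). Applying the splitting identity to \(c\) gives \(c^p=1\). Since \(|H|\) is prime to \(p\), this forces \(c=1\), so \(C_H(\alpha)=1\). Thus \(\alpha\) is fixed-point-free of prime order, and Thompson's theorem \cite{Thompson59} gives that \(H\) is nilpotent. The Higman and Kreknin--Kostrikin bound \cite{higman,krekkost} then gives class at most \(h(p)\), with \(h(p)\) depending only on \(p\). (Kegel's theorem also gives nilpotency directly, but the bound is what we need.) Set \(f=h\).

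\textbf{Conclusion and main obstacle.} For any \(x_1,\dots,x_{f(p)+1}\in G\), the commutator \([x_1,\dots,x_{f(p)+1}]\) lies in every \(M\) as above. The intersection of all such \(M\) is trivial, so \(G\) is nilpotent of class at most \(f(p)\). The step that needs care is the reduction: the finite-index subgroups must be made \(\varphi\)-invariant, and the case where \(\bar\varphi\) becomes trivial on a quotient must be excluded. Both are handled above by intersecting the \(\varphi\)-translates and by the \(p'\) hypothesis. One should also note that Higman's bound is for fixed-point-free automorphisms of prime order \(p\) of finite (nilpotent) groups, which is exactly the situation reached.
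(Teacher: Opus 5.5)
Your proposal is correct and takes essentially the same route as the paper: intersect the $\varphi$-translates of a finite-index normal subgroup to get a $\varphi$-invariant finite $p'$-quotient, use the inherited splitting identity to get $\bar y^{p}=1$ and hence that $\bar\varphi$ is fixed-point-free, and then apply the Higman bound (with Thompson's theorem for nilpotency). The only differences are cosmetic: the paper argues by contradiction from a chosen $1\neq x\in\gamma_{c+1}(G)$, whereas you argue directly from the trivial intersection of the invariant subgroups, and you are slightly more explicit than the paper about the case where $\bar\varphi$ is trivial and about where periodicity is needed.
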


As an immediate consequence, Sozutov's problem has a positive answer for residually finite groups.

\begin{cor}\label{cor:sozutov-rf}
Let \(G\) be a periodic residually finite \(p'\)-group admitting a splitting automorphism \(\varphi\) of prime order \(p\) such that
\[
\langle g,g^\varphi,\dots,g^{\varphi^{p-1}}\rangle
\]
is nilpotent for every \(g\in G\). Then \(G\) is nilpotent.
\end{cor}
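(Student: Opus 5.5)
The statement to prove is Corollary~\ref{cor:sozutov-rf}. It follows at once from Theorem~\ref{thm:bounded-class}, so the plan is just to check that the corollary's hypotheses contain those of the theorem.

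\textbf{Step 1: matching hypotheses.} In Corollary~\ref{cor:sozutov-rf}, \(G\) is assumed to be periodic, residually finite, and a \(p'\)-group. It admits a splitting automorphism \(\varphi\) of prime order \(p\). These are exactly the hypotheses of Theorem~\ref{thm:bounded-class}. The corollary also assumes that every subgroup \(\langle g,g^\varphi,\dots,g^{\varphi^{p-1}}\rangle\) is nilpotent. The theorem does not need this extra assumption, so we simply drop it.

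\textbf{Step 2: conclusion.} Theorem~\ref{thm:bounded-class} now gives that \(G\) is nilpotent of class at most \(f(p)\). In particular \(G\) is nilpotent, which is the claim, and the class bound depends only on \(p\).

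\textbf{Step 3: relation to Problem~\ref{prob:sozutov}.} A nilpotent group is locally nilpotent. Hence every periodic residually finite group satisfying the hypotheses of Problem~\ref{prob:sozutov} is locally nilpotent, so the problem has a positive answer in this class.

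There is no real obstacle here, since all the work is done in Theorem~\ref{thm:bounded-class}. The only point to note is that periodicity must be assumed explicitly. In Sozutov's problem, ``\(p'\)-group'' may allow elements of infinite order, but the theorem needs \(G\) periodic, and the corollary includes periodicity among its hypotheses.
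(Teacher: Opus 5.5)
Your proposal is correct and matches the paper, which also deduces the corollary directly from Theorem~\ref{thm:bounded-class}. The nilpotency assumption on the subgroups $\langle g,g^\varphi,\dots,g^{\varphi^{p-1}}\rangle$ is simply not needed. Your remark about periodicity is apt: the theorem's proof uses it, implicitly, to guarantee that the finite quotients $G/N$ are again $p'$-groups.
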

Our second main result shows that a possible counterexample
to Problem \ref{prob:sozutov} cannot be a Tarski monster.

\begin{theorem}\label{no-tarski}
Let \(G\) be a \(p'\)-group admitting a splitting automorphism \(\varphi\) of order \(p\). Assume that
\[
\langle g,g^\varphi,\dots,g^{\varphi^{p-1}}\rangle
\]
is nilpotent for every \(g\in G\). Then \(G\) is not isomorphic to a Tarski monster.
\end{theorem}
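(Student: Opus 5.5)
Suppose for contradiction that \(G\) is a Tarski monster for some prime \(q\). Then \(G\) is an infinite simple group in which every nontrivial proper subgroup is cyclic of order \(q\). Since \(G\) is a \(p'\)-group, we have \(q\neq p\). Set \(H_g=\langle g,g^\varphi,\dots,g^{\varphi^{p-1}}\rangle\). This subgroup is \(\varphi\)-invariant, and by hypothesis it is nilpotent. A Tarski monster is not nilpotent, so \(H_g\neq G\). Hence for every \(g\neq 1\), \(H_g\) is a nontrivial proper subgroup and therefore equals \(\langle g\rangle\). Consequently every cyclic subgroup \(\langle g\rangle\) is \(\varphi\)-invariant, and \(g^\varphi=g^{k(g)}\) for some \(k(g)\in(\mathbb Z/q)^\times\).

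The next step is to show that \(k\) is constant. Take \(g,h\) with \(\langle g\rangle\neq\langle h\rangle\). Then \(gh\) lies outside both \(\langle g\rangle\) and \(\langle h\rangle\); otherwise we would get \(h\in\langle g\rangle\) or \(g\in\langle h\rangle\). Write \(a=k(g)\), \(b=k(h)\), \(c=k(gh)\). Then
\[
g^a h^b=(gh)^\varphi=(gh)^c .
\]
The subgroup \(\langle g,h\rangle\) is not cyclic of order \(q\), so it must be all of \(G\). I expect this step to be the main obstacle, since Tarski monsters carry few usable relations. If \(a=b\), we are done for this pair. If \(a\neq b\), I would exploit the fact that \(\varphi\) acts as a power map on every cyclic subgroup. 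Since \(\varphi^p=1\), each \(k(x)\) satisfies \(k(x)^p=1\) in \((\mathbb Z/q)^\times\). The first thing to try is this: from \(g^ah^b=(gh)^c\), conjugating and comparing gives \(g^{a}h^{b}\) a power of \(gh\). Combined with the analogous relations for \(g h^j\) for all \(j\), this should force a nontrivial relation showing that \(\langle g,h\rangle\) is abelian, or that some element other than the identity lies in two distinct maximal subgroups. Either outcome contradicts the Tarski property. Concretely, applying \(\varphi\) to \(gh\) and to \(hg=g^{-1}(gh)g\) gives \((hg)^c=h^bg^a\) and \((gh)^c=g^ah^b\). I would then try to show that, for \(a\neq b\), the element \(x=g^{-a}(gh)^c=h^b\) yields a conjugation relation placing \(h\) in the normalizer of \(\langle g\rangle\). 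In a Tarski monster every \(\langle g\rangle\) is self-normalizing, since the normalizer is a proper subgroup, and this would give the contradiction. Once \(k\) is known to be constant, \(\varphi\) acts as \(x\mapsto x^k\) on all of \(G\).

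Now use the splitting condition. For every \(g\),
\[
g\,g^{k}\,g^{k^2}\cdots g^{k^{p-1}}=g^{1+k+\dots+k^{p-1}}=1 .
\]
If \(k=1\), then \(\varphi\) is the identity, contradicting that \(\varphi\) has order \(p\). So \(k\neq1\), \(k^p=1\), and the exponent sum vanishes modulo \(q\), which is consistent. Thus the contradiction has to come from \(x\mapsto x^k\) being an automorphism of the nonabelian group \(G\). It gives \((xy)^k=x^ky^k\) for all \(x,y\), with \(k\not\equiv 0,1 \pmod q\). A standard argument then makes \(G\) abelian: from \((xy)^k=x^ky^k\) one obtains \((yx)^{k-1}=x^{k-1}y^{k-1}\), and since \(k-1\) is invertible modulo the exponent \(q\), the map \(x\mapsto x^{k-1}\) is a bijection. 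The usual identities then show that \(x^{k-1}\) commutes with \(y^{k}\), hence all elements commute. This contradicts simplicity and infiniteness, so \(G\) cannot be a Tarski monster.
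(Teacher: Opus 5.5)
There is a genuine gap in your middle step. Your reduction to a power automorphism ($H_g=\langle g\rangle$, so $g^\varphi=g^{k(g)}$) is exactly the paper's first step. Your last step is also correct: a universal power map $x\mapsto x^k$ with $k\not\equiv 0,1 \pmod q$ forces $G$ to be abelian. The gap is the claim that $k$ is constant, which you do not prove. You only describe what you "would try". The concrete manipulation you offer, $x=g^{-a}(gh)^c=h^b$, is just a rewriting of $(gh)^\varphi=g^ah^b$. It produces no conjugation relation putting $h$ in $N_G(\langle g\rangle)$. Nothing you write turns the single relation $g^ah^b=(gh)^c$ into a contradiction, and it is not clear this can be done pair by pair. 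This is precisely the nontrivial content. The paper imports it from Cooper's theorem that every power automorphism is central, $x^{-1}x^\varphi\in Z(G)$, whence $\varphi=1$ because $Z(G)=1$.

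You can close the gap without Cooper, and your idea of using self-normalizing cyclic subgroups is the right one. It must be applied to the commutator of $\varphi$ with an inner automorphism, not to the pair $g,h$.

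Power automorphisms form a subgroup of $\operatorname{Aut}(G)$ that is normalized by every automorphism. Write $\iota_x$ for conjugation by $x$, with maps composed left to right as in the exponential notation. Then $\varphi^{-1}\iota_x\varphi=\iota_{x^\varphi}$, so $\iota_x^{-1}(\varphi^{-1}\iota_x\varphi)=\iota_{x^{-1}x^\varphi}$. Regrouped as $(\iota_x^{-1}\varphi^{-1}\iota_x)\varphi$, the same element is a power automorphism. Hence $x^{-1}x^\varphi$ normalizes every cyclic subgroup of $G$.

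In a Tarski monster, $N_G(\langle g\rangle)$ is a proper subgroup containing $\langle g\rangle$, so it equals $\langle g\rangle$. Distinct subgroups of order $q$ meet trivially. Therefore $x^{-1}x^\varphi=1$ for every $x$, so $\varphi=1$, contradicting that $\varphi$ has order $p$. This makes both the constancy of $k$ and your final commutativity argument unnecessary.
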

One of the most important results in the theory of locally finite groups is the following:
\begin{theorem}[Hall--Kulatilaka]\label{Hall-Kulatilaka}
Every infinite locally finite group contains an infinite abelian subgroup.
\end{theorem}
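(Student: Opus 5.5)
The plan is to argue by contradiction. Let \(G\) be an infinite locally finite group in which every abelian subgroup is finite, and reduce to a situation where the classification-free structure theorems for finite groups apply. This is the classical route of Hall--Kulatilaka and Kargapolov, which relies on the Feit--Thompson odd order theorem and on the Brauer--Fowler bound for centralizers of involutions.

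\emph{Step 1: reduction to small centralizers.} Consider the set of abelian subgroups \(A\le G\) with \(C_G(A)\) infinite; it contains \(A=1\). The union of a chain of such subgroups is abelian, hence finite by hypothesis. So the chain stabilizes, its union still has infinite centralizer, and Zorn's lemma yields a maximal such \(A\). Put \(H=C_G(A)\), an infinite locally finite group with \(A\le Z(H)\), whose abelian subgroups are all finite. For any \(x\in H\setminus A\), the group \(\langle A,x\rangle\) is abelian and \(C_G(\langle A,x\rangle)=C_H(x)\). Maximality of \(A\) therefore forces \(C_H(x)\) to be finite. Hence in \(H\) every element outside the finite central subgroup \(A\) has finite centralizer, and we may replace \(G\) by \(H\).

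\emph{Step 2: the case without involutions outside \(A\).} If \(H\) has no involutions, it is locally of odd order, so by Feit--Thompson it is locally solvable. The claim is that an infinite locally solvable group contains an infinite abelian subgroup. To see this, choose finite subgroups \(F_1<F_2<\cdots\) with \(F_i\) finite solvable. Each \(F_i\) has an abelian normal series, and since \(A\) is finite and centralizers of elements outside \(A\) are finite, the orders of the terms of these series are bounded. That bound forces \(|F_i|\) to be bounded, contradicting \(|H|=\infty\). The precise form of this step is: in a finite solvable group in which \(|C(x)|\le n\) for some \(x\), the order of the group is bounded in terms of \(n\) and the derived length. The derived length is in turn bounded by counting the last nontrivial term of the derived series, which is an abelian normal subgroup centralized by that term.

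\emph{Step 3: the case with an involution \(t\in H\setminus A\).} Here \(C_H(t)\) is finite, say of order \(n\). By Brauer--Fowler, every finite subgroup \(F\ni t\) of \(H\) has a solvable subgroup of index bounded in terms of \(n\); equivalently, this follows from Shunkov's theorem that a locally finite group with an involution of finite centralizer is virtually solvable. Passing to the union of an ascending chain of finite subgroups, \(H\) has a locally solvable subgroup of finite index, which is infinite. Applying Step 2 to it gives an infinite abelian subgroup, a contradiction.

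The main obstacle is Step 3. One must make the Brauer--Fowler bound uniform along the chain of finite subgroups, so that the solvable subgroups of bounded index can be chosen compatibly in the limit; this uses a standard compactness/inverse-limit argument over the finitely many choices at each stage. One must also check that the hypothesis of Step 2 survives passing to a finite-index subgroup. I would first try to cite Shunkov's theorem directly for this step, and fall back on the Brauer--Fowler argument only if a self-contained version is needed.
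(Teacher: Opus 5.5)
The paper gives no proof of this statement; it quotes it as a classical theorem for motivation only, so your proposal has to stand on its own. Step 1 is correct. Step 2 does not work, and everything rests on it, since Step 3 also ends by invoking it.

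The ``precise form'' you state is false. Take an odd prime $p$ and primes $q\equiv 1 \pmod p$. The Frobenius group $C_q\rtimes C_p$ has odd order, derived length $2$, and an element $x$ of order $p$ with $|C(x)|=p$, yet its order $pq$ is unbounded.

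More fundamentally, none of your hypotheses is uniform along the chain $F_1<F_2<\cdots$.
\begin{itemize}
\item Each abelian subgroup of $H$, and each $C_H(x)$ with $x\notin A$, is finite. But these orders depend on the subgroup or on $x$, so nothing bounds the terms of abelian normal series of the $F_i$. In the example above, $C_q$ grows while $|C(x)|$ stays $p$.
\item Feit--Thompson gives only local solvability, so the derived lengths of the $F_i$ may be unbounded. Your remark about ``counting the last nontrivial term of the derived series'' does not bound them.
\end{itemize}
So the locally soluble case, which is a substantial part of the theorem, is not proved. Contrary to your assessment, this, not Step 3, is the real obstacle.

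To repair it you need an input that is uniform in the chain. First pass to $\bar H=H/A$. For $x\notin A$, the map $h\mapsto[h,x]$ is a homomorphism from the preimage of $C_{\bar H}(xA)$ into the central subgroup $A$, with kernel $C_H(x)$. Hence every nontrivial element of $\bar H$ has finite centralizer.

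The locally nilpotent case is then easy. Suppose $\bar H$ is locally nilpotent, and take $1\ne z\in\bar H$ and any $y\in\bar H$. The finite nilpotent group $F=\langle C_{\bar H}(z),y\rangle$ satisfies $1\ne Z(F)\le C_{\bar H}(z)$. So $y\in F\le C_{\bar H}(w)$ for one of the finitely many $w\in C_{\bar H}(z)\setminus\{1\}$, and therefore $\bar H$ is finite.

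For the locally soluble case you need a genuine theorem on finite soluble groups. One option is a Hartley--Meixner type bound: a finite soluble group with an element of prime order whose centralizer has order $m$ has a nilpotent subgroup of $m$-bounded index. Combined with the compactness argument you describe, this gives an infinite locally nilpotent subgroup of finite index, which the previous paragraph rules out.

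Working in $\bar H$ also fixes your case division. Step 2 assumes $H$ has no involutions and Step 3 assumes an involution outside $A$. The case where $A$ contains involutions but $H\setminus A$ does not is therefore missed. There Feit--Thompson does not yield local solvability; for example, $\mathrm{SL}(2,5)$ is nonsolvable with a unique, central involution. In $\bar H$ the dichotomy ``involution or no involution'' is clean.
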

However, existence of a periodic residually finite analogue of Theorem \ref{Hall-Kulatilaka} is still unknown. In particular, A. Mann asked the following:
\begin{problem}\cite[A. Mann, Problem 11.56]{kourovka21} \label{mannquestion}Does every infinite residually finite group contain an infinite abelian subgroup? Does every infinite residually finite $p$-group contain an infinite abelian subgroup? The same questions are valid for infinite centralizers.
\end{problem}
If there is an infinite residually finite group whose all centralizers are finite, it must contain no involutions by \cite{Shunkov}. Hence, every finite subgroup is solvable. However, since periodic solvable groups are locally finite and locally finite groups have infinite abelian subgroups, in such a counterexample every solvable subgroup is finite. That means if there is a periodic residually finite group with finite centralizers, there is also a non-solvable finitely generated example. Since the possible counterexamples of both of the problems lead us to this direction,
we may formulate an intermediate problem weaker than problems by Sozutov and Mann.
\begin{problem}
Can a hypothetical counterexample to Problem \ref{mannquestion} admit a splitting automorphism?
\end{problem}

\section{Preliminaries}

We collect some basic facts that will be used later.

\begin{prop}\label{prop:centralizer-exp}
Let \(G\) be a group, and let \(\varphi\in \operatorname{Aut}(G)\) be a splitting automorphism of order \(n\). Then \(C_G(\varphi)\) has exponent dividing \(n\).
\end{prop}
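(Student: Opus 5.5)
The argument is direct. Take \(g\in C_G(\varphi)\), so that \(g^{\varphi}=g\). Induction on \(i\) then gives \(g^{\varphi^i}=g\) for every \(i\ge 0\). Now apply the splitting identity to this \(g\):
\[
1 = g\,g^{\varphi}\,g^{\varphi^2}\cdots g^{\varphi^{n-1}} = g\cdot g\cdots g = g^n .
\]
Hence every element of \(C_G(\varphi)\) satisfies \(g^n=1\), which says exactly that \(C_G(\varphi)\) has exponent dividing \(n\). (The subgroup property of \(C_G(\varphi)\) is standard, so only the power identity needs checking.)

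No step here is a real obstacle. The only point to watch is that the splitting identity is a product of images of one element \(g\), not of arbitrary elements. For a fixed element these images all coincide, so the product collapses to the power \(g^n\).

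A remark useful later: if \(n=p\) is prime and \(G\) is a \(p'\)-group, then \(g^p=1\) forces \(g=1\). So in the setting of Theorem \ref{thm:bounded-class}, \(\varphi\) is fixed-point-free. I expect this consequence to be the main use of the proposition.
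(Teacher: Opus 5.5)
Your argument is correct and is exactly the paper's proof: for \(g\in C_G(\varphi)\) every image \(g^{\varphi^i}\) equals \(g\), so the splitting identity collapses to \(g^n=1\). Your closing remark also matches how the paper uses this fact. There, however, it is applied to the induced splitting automorphism on the finite quotient \(G/N\), not to \(\varphi\) on \(G\) itself.
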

\begin{proof}
If \(x\in C_G(\varphi)\), then $$x=x^\varphi=\cdots=x^{\varphi^{n-1}}.$$ Now,
$x.x^\varphi\cdots x^{\varphi^{n-1}}=1$ implies $x^n=1$.
\end{proof}

\begin{prop}\label{prop:fpf-vs-splitting}\cite[7.1.3 Lemma]{khukhro1993nilpotent}
\leavevmode
\begin{enumerate}
\item If \(G\) is finite and \(\varphi\) is fixed-point-free, then \(\varphi\) is a splitting automorphism.
\item If \(G\) is periodic and contains no nontrivial element whose order divides \(n\), then every splitting automorphism of order \(n\) is fixed-point-free.
\end{enumerate}
\end{prop}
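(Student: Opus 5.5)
Both parts are classical, and the plan is to prove each directly from the definitions.

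For part (1), let \(G\) be finite and \(\varphi\) fixed-point-free of order \(n\).

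The first step is to show that \(x\mapsto x^{-1}x^{\varphi}\) is injective on \(G\). Indeed, suppose \(x^{-1}x^\varphi=y^{-1}y^\varphi\). Then \(yx^{-1}=(yx^{-1})^\varphi\), so \(yx^{-1}\in C_G(\varphi)=1\). Since \(G\) is finite, this map is therefore a bijection, and every \(g\in G\) can be written as \(g=x^{-1}x^\varphi\) for some \(x\in G\).

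The second step is to compute the norm product. The product telescopes:
\[
g\,g^\varphi\cdots g^{\varphi^{n-1}}=x^{-1}x^\varphi\,(x^{\varphi})^{-1}x^{\varphi^2}\cdots (x^{\varphi^{n-1}})^{-1}x^{\varphi^n}=x^{-1}x^{\varphi^n}=1,
\]
because \(\varphi^n=1\). This is exactly the splitting condition.

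For part (2), let \(G\) be periodic with no nontrivial element of order dividing \(n\), and let \(\varphi\) be splitting of order \(n\). Take any \(x\in C_G(\varphi)\). By Proposition \ref{prop:centralizer-exp}, \(x^n=1\), so the order of \(x\) divides \(n\). By hypothesis this forces \(x=1\), so \(C_G(\varphi)=1\) and \(\varphi\) is fixed-point-free.

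The only point needing care is in part (1): surjectivity of \(x\mapsto x^{-1}x^\varphi\) genuinely uses finiteness. This is why part (1) is stated only for finite \(G\), and why part (2) instead has to rely on the periodicity and order hypotheses.
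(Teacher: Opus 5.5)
Your proof is correct and is the standard argument the paper defers to by citing \cite[7.1.3 Lemma]{khukhro1993nilpotent}: for part (1), injectivity (hence, by finiteness, bijectivity) of $x\mapsto x^{-1}x^{\varphi}$ followed by the telescoping product. Your part (2) is exactly the reasoning the paper itself uses in Proposition~\ref{prop:centralizer-exp} and again in the proof of Theorem~\ref{thm:bounded-class}, where a fixed point $yN$ of $\overline{\varphi}$ satisfies $(yN)^p=N$ and is therefore trivial in the $p'$-group $G/N$.
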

The following result shows that on the contrary to finite groups, fixed-point-free automorphisms and splitting automorphisms have different natures.
\begin{theorem}\cite{Neumann1940}
Let \(G\) be a periodic group with a fixed-point-free automorphism of order \(2\). Then \(G\) is abelian without elements of order \(2\).
\end{theorem}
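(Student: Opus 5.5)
This is the classical Neumann theorem; I would give the standard short argument. Let $\varphi$ be the fixed-point-free automorphism of order $2$. The plan has three steps: every element is inverted by $\varphi$, which forces $G$ to be abelian, and then $2$-torsion is excluded by looking at the map $x\mapsto x^2$.

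\emph{Step 1: the map $x\mapsto x^{-1}x^\varphi$ is injective.} If $x^{-1}x^\varphi=y^{-1}y^\varphi$, then $yx^{-1}=y^\varphi(x^\varphi)^{-1}=(yx^{-1})^\varphi$. Since $C_G(\varphi)=1$, this gives $x=y$.

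\emph{Step 2: the map is surjective, so $\varphi$ inverts every element.} For finite $G$ surjectivity is immediate, but here $G$ is only periodic, and this is the step I expect to be the main obstacle. I would avoid surjectivity and argue directly that every $g\in G$ satisfies $g^\varphi=g^{-1}$.
\begin{itemize}
\item[(i)] Put $t=gg^\varphi$. Then $t^\varphi=g^\varphi g=g^{-1}tg$, so $\varphi$ maps $t$ to a conjugate of $t$.
\item[(ii)] Better, consider $H=\langle g,g^\varphi\rangle$. It is $\varphi$-invariant, generated by two elements interchanged by $\varphi$, and $\varphi$ is fixed-point-free on $H$.
\item[(iii)] In the holomorph $\langle H,\varphi\rangle$, the involutions $\varphi$ and $\varphi g$ generate a dihedral-type group. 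Indeed $(\varphi g)^2=g^\varphi g$, which has finite order $m$ because $G$ is periodic, so $\langle\varphi,\varphi g\rangle$ is finite dihedral of order $2m$.
\item[(iv)] Let $u=g^\varphi g$. In a finite dihedral group with $m$ odd, $\varphi$ and $\varphi g$ are conjugate by a power of $u$. In particular $\varphi$ centralizes some element of the form $u^k g$ or similar. Using $C_G(\varphi)=1$ together with the inversion $u^\varphi=u^{-1}$, I would deduce $g^\varphi=g^{-1}$.
\end{itemize}

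The delicate point is parity. If $m$ were even, the dihedral group would have a central involution $u^{m/2}$, which is fixed by $\varphi$. Fixed-point-freeness then forces $u^{m/2}=1$. Iterating this kills the whole $2$-part of $m$, so $m$ is odd and the conjugacy argument in (iv) applies.

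\emph{Step 3: $G$ is abelian.} By Step 2, every element satisfies $x^\varphi=x^{-1}$. Hence
\[
(xy)^{-1}=(xy)^\varphi=x^{-1}y^{-1},
\]
so $y^{-1}x^{-1}=x^{-1}y^{-1}$ for all $x,y$, and $G$ is abelian.

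\emph{Step 4: $G$ has no elements of order $2$.} If $x^2=1$, then $x^\varphi=x^{-1}=x$, so $x\in C_G(\varphi)=1$. Thus $G$ contains no involutions.
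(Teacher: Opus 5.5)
The paper gives no proof of this theorem; it only cites Neumann. Your argument therefore has to stand on its own. Steps 3 and 4 are fine, and Step 1 is never used. Step 2, the only substantive step, is broken.

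\textbf{What fails in Step 2.} In (iii) you call $\varphi g$ an involution. Your own computation gives $(\varphi g)^2=g^\varphi g$, which is trivial exactly when $g^\varphi=g^{-1}$, i.e.\ exactly the conclusion you are after. So $\langle\varphi,\varphi g\rangle=\langle g,g^\varphi\rangle\rtimes\langle\varphi\rangle$ is not known to be dihedral. The claim is not even internally consistent. If $\varphi$ and $\varphi g$ were both involutions, their product $\varphi\cdot\varphi g=g$ would be the ``rotation'', not $u$, and $u=(\varphi g)^2$ would be $1$. The inversion $u^\varphi=u^{-1}$ invoked in (iv) is also false for $u=g^\varphi g$: by your (i) one only has $u^\varphi=gg^\varphi=gug^{-1}$. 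Consequently (iv), which is in any case only sketched (``$u^kg$ or similar''), has nothing to rest on. The parity remark is also garbled. If $m$ is the order of $u$ and $m$ is even, then $u^{m/2}$ has order exactly $2$, so $C_G(\varphi)=1$ gives an immediate contradiction; there is nothing to iterate.

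\textbf{The missing idea.} Use the correct pair of involutions: $\varphi$ and its \emph{conjugate} $\varphi^g=g^{-1}\varphi g$ in $G\rtimes\langle\varphi\rangle$.
\begin{enumerate}
\item Their product $c=\varphi\varphi^g=(g^{-1})^\varphi g$ lies in $G$ and has finite order $m$ by periodicity.
\item It really is inverted by $\varphi$: $c^\varphi=g^{-1}g^\varphi=c^{-1}$. Hence $\langle\varphi,\varphi^g\rangle=\langle c\rangle\rtimes\langle\varphi\rangle$ is dihedral of order $2m$.
\item If $m$ were even, $c^{m/2}$ would be an element of order $2$ in $G$ fixed by $\varphi$, contradicting $C_G(\varphi)=1$. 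So $m$ is odd.
\item Choose $k$ with $2k\equiv 1\pmod m$. Then $\varphi^{c^k}=\varphi c^{2k}=\varphi c=\varphi^g$, so $gc^{-k}$ commutes with $\varphi$.
\item Hence $gc^{-k}\in C_G(\varphi)=1$, so $g=c^k$ is inverted by $\varphi$.
\end{enumerate}
With this in place of Step 2, your Steps 3 and 4 complete the proof.
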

\begin{remark}
    
In infinite groups, in contrast to the finite case, the relationship between fixed-point-free automorphisms and splitting automorphisms is far less direct. For example, the free group of rank \(2\) admits a fixed-point-free automorphism of order \(2\), whereas every group admitting a splitting automorphism of order \(2\) is abelian.
\end{remark}

\begin{prop}\label{prop:quotient}
Let \(G\) be a group, let \(\varphi\in \operatorname{Aut}(G)\) have finite order $n$ and let \(M\trianglelefteq G\) be of finite index. Then there exists a \(\varphi\)-invariant normal subgroup \(N\trianglelefteq G\) of finite index with \(N\leq M\). In particular, \(\varphi\) induces an automorphism of \(G/N\). Moreover, if \(\varphi\) is a splitting automorphism of \(G\), then the induced automorphism on \(G/N\) is also splitting.
\end{prop}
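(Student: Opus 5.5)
The plan is to take $N$ to be the intersection of the images of $M$ under the powers of $\varphi$, namely
\[
N=\bigcap_{i=0}^{n-1} M^{\varphi^{i}} .
\]
Each $M^{\varphi^i}$ is the image of $M$ under an automorphism of $G$. Hence each one is normal in $G$ and has the same finite index $|G:M|$.

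A finite intersection of normal subgroups of finite index is again normal of finite index, with $|G:N|\le |G:M|^n$ (Poincaré). Since the $i=0$ term is $M$ itself, we get $N\le M$. To see that $N$ is $\varphi$-invariant, note that $\varphi$ permutes the family $\{M^{\varphi^i}: 0\le i\le n-1\}$ cyclically, because $\varphi^n=1$. Therefore
\[
N^\varphi=\bigcap_i M^{\varphi^{i+1}}=N .
\]
It follows that $\varphi$ induces a well-defined map $\bar\varphi\colon gN\mapsto g^\varphi N$ on $G/N$. This map is a homomorphism, it is bijective with inverse induced by $\varphi^{-1}$, and it satisfies $\bar\varphi^{\,n}=1$.

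For the splitting property, take $gN\in G/N$. Then
\[
(gN)(gN)^{\bar\varphi}\cdots (gN)^{\bar\varphi^{\,n-1}}=\bigl(g g^\varphi\cdots g^{\varphi^{n-1}}\bigr)N=N .
\]
The one point needing care, and the only real obstacle, is that the order of $\bar\varphi$ may be a proper divisor $d$ of $n$. The definition of splitting is phrased in terms of the order of the automorphism, so in that case the product above has the wrong length. I would handle this in one of two ways.

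The first option is to interpret "splitting" for $\bar\varphi$ as the identity with $n$ factors, which is the form used when passing to quotients in the sequel.

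The second option applies in the case of interest, where $n=p$ is prime. Here $d\in\{1,p\}$. If $d=1$, then $\bar\varphi$ is trivial, and the identity says $x^p=1$ for all $x\in G/N$. This is consistent with Proposition \ref{prop:centralizer-exp}, and for a $p'$-group it forces $G/N=1$, so the statement is vacuous. If $d=p$, the $n$-factor identity is exactly the splitting identity for $\bar\varphi$.

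I would state the quotient claim with the understanding that the splitting identity of length $n$ is inherited, and note the prime-order case explicitly.
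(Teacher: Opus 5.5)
Your proof is correct and follows the paper's argument: the same subgroup $N=\bigcap_{i=0}^{n-1}M^{\varphi^{i}}$, the same cyclic-shift argument for $\varphi$-invariance, and the same passage of the product identity to $G/N$. Your caveat about the order of $\bar\varphi$ is a genuine point that the paper passes over silently; for example, inversion on $\mathbb{Z}$ with $M=2\mathbb{Z}$ gives $N=2\mathbb{Z}$ and the identity automorphism on $\mathbb{Z}/2\mathbb{Z}$, which is not splitting in the literal sense. As you note, however, only the $n$-factor identity is used later, and in the periodic $p'$-setting of Theorem~\ref{thm:bounded-class} your prime-order analysis shows nothing is lost.
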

\begin{proof}
Set
\[
N=\bigcap_{i=0}^{n-1}\varphi^i(M).
\]
Since each \(\varphi^i(M)\) is a normal subgroup of finite index in \(G\),
the subgroup \(N\) is also normal and of finite index.

Moreover,
\[
\varphi(N)
=
\bigcap_{i=0}^{n-1}\varphi^{i+1}(M)
=
N,
\]
since \(\varphi^n=1\). Thus \(N\) is \(\varphi\)-invariant, and therefore
\(\varphi\) induces an automorphism \(\overline{\varphi}\) of \(G/N\).

If \(\varphi\) is a splitting automorphism of \(G\), then
\[
g\,g^\varphi\cdots g^{\varphi^{n-1}}=1
\]
for every \(g\in G\). Passing to the quotient modulo \(N\), we obtain
\[
(gN)(gN)^{\overline{\varphi}}\cdots
(gN)^{\overline{\varphi}^{\,n-1}}
=N.
\]
Hence \(\overline{\varphi}\) is a splitting automorphism of \(G/N\).
\end{proof}

\section{Some properties of Tarski monsters}

We now collect the facts about Tarski monsters that will be needed in the sequel.
In particular, all Tarski monsters appearing in this section are periodic, hence infinite, two-generated simple $p$-groups. For the details of their construction, see \cite{Olshanskii1991}.

\begin{prop}\label{prop:tarski-no-automorphism}
Let \(G\) be a Tarski monster. There is no nontrivial automorphism \(\varphi \in \operatorname{Aut}(G)\) of order $n$ such that, for every \(g \in G\), the subgroup
\[
\langle g, g^\varphi, \dots, g^{\varphi^{\,n-1}} \rangle
\]
is nilpotent.
\end{prop}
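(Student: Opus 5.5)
The plan is to use the defining property of a Tarski monster: every proper nontrivial subgroup of \(G\) is cyclic of prime order \(q\). Suppose \(\varphi\) is a nontrivial automorphism of order \(n\) such that every subgroup \(H_g=\langle g, g^\varphi,\dots,g^{\varphi^{n-1}}\rangle\) is nilpotent. Since \(G\) is simple and nonabelian, it is not nilpotent, so \(H_g\neq G\) for every \(g\). Hence each \(H_g\) is a proper subgroup, so it is either trivial or cyclic of order \(q\). If \(g\neq 1\) then \(H_g\) is nontrivial, so \(H_g=\langle g\rangle\), because \(g\in H_g\) and \(H_g\) has prime order. In particular \(g^\varphi\in\langle g\rangle\) for every \(g\in G\). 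Thus \(\varphi\) leaves invariant every cyclic subgroup, and hence every subgroup of \(G\).

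Next we show that an automorphism fixing every subgroup of a Tarski monster is trivial. For each \(g\neq 1\) there is an exponent \(k_g\in(\mathbb Z/q\mathbb Z)^\times\) with \(g^\varphi=g^{k_g}\). We claim \(k_g\) does not depend on \(g\). Take \(x,y\) with \(\langle x\rangle\neq\langle y\rangle\); then \(\langle x,y\rangle=G\), since a proper subgroup would be cyclic of order \(q\) and would force \(\langle x\rangle=\langle y\rangle\). The element \(xy\) is nontrivial, so
\[
(xy)^{k_{xy}} = (xy)^\varphi = x^{k_x}y^{k_y}.
\]
To extract the relation \(k_x=k_y=k_{xy}\), the cleanest route is the standard one: \(\varphi\) is a power automorphism. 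Power automorphisms of any group are central in \(\operatorname{Aut}(G)\), and by Cooper's theorem they commute with inner automorphisms. Therefore \(\varphi\cdot\iota_h^{-1}\) fixes \(\varphi\)-images, that is, \((g^{\varphi})^{h}=(g^h)^{\varphi}\) for all \(g,h\). Since \(G\) has trivial center, the map \(\operatorname{Inn}(G)\to\operatorname{Aut}(G)\) is injective, and \(\varphi\) lies in the centralizer of \(\operatorname{Inn}(G)\) in \(\operatorname{Aut}(G)\). That centralizer is trivial when \(Z(G)=1\): if \(\varphi\iota_h\varphi^{-1}=\iota_{h^{\varphi}}\) equals \(\iota_h\) for all \(h\), then \(h^{\varphi}h^{-1}\in Z(G)=1\), so \(h^\varphi=h\). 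Hence \(\varphi=1\), contradicting nontriviality.

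The main obstacle is the step "power automorphisms commute with inner automorphisms" (Cooper), which is a general but nontrivial result. If I prefer to avoid citing it, I would argue directly as follows. For \(x\) and any \(h\), the element \(x^h\) satisfies
\[
(x^h)^\varphi=(x^\varphi)^{h^\varphi}=(x^{k_x})^{h^{k_h}}.
\]
This must equal \((x^h)^{k_{x^h}}\), so \(h^{-k_h}x^{k_x}h^{k_h}\in\langle h^{-1}xh\rangle\). Taking \(h\notin N_G(\langle x\rangle)=\langle x\rangle\), two conjugates of \(\langle x\rangle\) intersect trivially unless they are equal. From this we get \(h^{1-k_h}\in N_G(\langle x\rangle)=\langle x\rangle\). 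This forces \(h^{1-k_h}=1\), so \(k_h=1\). Since such an \(h\) exists outside any given cyclic subgroup, and the elements outside \(\langle x\rangle\) generate \(G\), we conclude \(\varphi=1\).
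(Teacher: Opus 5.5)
Your proposal is correct. The reduction and your first finish coincide with the paper's proof. In the reduction, each $\langle g,\dots,g^{\varphi^{n-1}}\rangle$ is proper, hence equals $\langle g\rangle$, so $\varphi$ is a power automorphism. In the finish, commuting with all inner automorphisms is the same as $x^{-1}x^{\varphi}\in Z(G)$ for all $x$, which is the form in which the paper quotes Cooper, and $Z(G)=1$ then forces $\varphi=1$.

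Two asides in that finish are wrong, though you never use them:
\begin{itemize}
\item Power automorphisms need not be central in $\operatorname{Aut}(G)$. In $Q_8$ every inner automorphism is a power automorphism, yet $\operatorname{Inn}(Q_8)\cong C_2\times C_2$ is not central in $\operatorname{Aut}(Q_8)\cong S_4$.
\item What $Z(G)=1$ makes injective is $G\to\operatorname{Inn}(G)$, not $\operatorname{Inn}(G)\to\operatorname{Aut}(G)$.
\end{itemize}
The abandoned computation with $(xy)^{k_{xy}}$ should be deleted.

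Your Cooper-free finish is a genuinely different route, and it is sound.
\begin{itemize}
\item From $(x^h)^{\varphi}=(x^{k_x})^{h^{k_h}}\in\langle x\rangle^h$ you get $\langle x\rangle^{h^{k_h}}=\langle x\rangle^{h}$, so $h^{k_h-1}\in N_G(\langle x\rangle)$.
\item This normalizer equals $\langle x\rangle$. It contains $\langle x\rangle$, and it is proper because $G$ is simple and $\langle x\rangle$ is a nontrivial proper subgroup. You should state this explicitly.
\item Since $h\notin\langle x\rangle$, the two subgroups of prime order satisfy $\langle h\rangle\cap\langle x\rangle=1$, so $h^{k_h-1}=1$ and $h^{\varphi}=h$.
\item Every $h\neq 1$ lies outside some $\langle x\rangle$, because $G$ is infinite. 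So $\varphi=1$ directly, without needing the generation remark.
\end{itemize}

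What your route buys is self-containment. You prove, for Tarski monsters, exactly the special case of Cooper's theorem that is needed, using only that nontrivial cyclic subgroups are self-normalizing and pairwise intersect trivially. The paper's citation is shorter and more general: every power automorphism of a centerless group is trivial. So after the first step the paper needs nothing about $G$ except $Z(G)=1$.
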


\begin{proof}
Let \(G\) be a Tarski monster and suppose, for a contradiction, that there exists a non-trivial automorphism \(\varphi \in \operatorname{Aut}(G)\) of order $n$ such that for every \(g \in G\) the subgroup
\[
K_g := \langle g, g^\varphi, \dots, g^{\varphi^{\,n-1}} \rangle
\]
is nilpotent.

Fix \(1 \neq g \in G\). If \(K_g = G\), then \(G\) is nilpotent, which is impossible since $G$ is an infinite simple group.  Hence \(K_g\) is a proper subgroup of \(G\). By definition of a Tarski monster, every proper nontrivial subgroup of \(G\) is cyclic of prime order. Therefore \(K_g = \langle g \rangle\), and in particular \(g^\varphi \in \langle g \rangle\).

Since this holds for every \(g \in G\), it follows that \(\varphi\) is a power automorphism of \(G\).

By \cite{Cooper1968}, every power automorphism of \(G\) is central, that is,
\[
x^{-1}x^\varphi \in Z(G) \quad \text{for all } x \in G.
\]
Since \(Z(G)=1\), it follows that \(x^\varphi = x\) for all \(x \in G\), so \(\varphi = \operatorname{id}_G\), contradicting the assumption that \(\varphi\) is non-trivial.

Therefore no such automorphism exists.
\end{proof}

\begin{remark}\label{rem:involve-analogue-false}
The direct analogue of Proposition \ref{prop:tarski-no-automorphism} for groups merely involving a Tarski monster is false in general; see Example \ref{ex:tarski-times-cyclic}.
\end{remark}

\begin{remark}\label{rem:inner-tarski}
Unlike the finite case, infinite non-solvable groups can have splitting automorphisms of prime order. Let \(G\) be a Tarski monster \(p\)-group. By \cite{Olshanskii1991}, every proper nontrivial subgroup of \(G\) has order \(p\), where \(p\) is a suitably large prime. Fix a nontrivial \(g \in G\). For every \(x \in G\), one has
\[
x x^{g}\cdots x^{g^{p-1}}=(xg^{-1})^p=1.
\]
Thus every inner automorphism of a Tarski monster \(p\)-group induced by a nontrivial element is a splitting automorphism of order \(p\).
\end{remark}

\begin{example}\label{ex:tarski-times-cyclic}
Let \(p\) be a sufficiently large prime, and let \(H\) be a Tarski monster of exponent \(p\). Let \(A\) be a cyclic group of prime order \(q\) such that \(p\mid q-1\). Then \(A\) admits a fixed-point-free automorphism \(\alpha\) of order \(p\). Set \(G=H\times A\), and define
\[
(h,a)^\varphi=(h,a^\alpha)
\qquad (h\in H,\ a\in A).
\]
Then \(\varphi\) is a splitting automorphism of order \(p\) of the infinite finitely generated group \(G\). Moreover, for every \((h,a)\in G\), the subgroup
\[
\langle (h,a),(h,a)^\varphi,\dots,(h,a)^{\varphi^{p-1}}\rangle
\]
is a finite cyclic group. However, \(G\) is not nilpotent.
\end{example}

\section{The proof of the main theorem}

\begin{proof}[Proof of Theorem \ref{thm:bounded-class}] 
Let $c=c(p)$ denote the bound for the nilpotency class of a finite group admitting a fixed-point-free automorphism of order $p$, which exists by \cite{higman}. We claim that
\[
\gamma_{c+1}(G)=1.
\]

Suppose, for a contradiction, that $\gamma_{c+1}(G)\neq 1$, and choose
\[
1\neq x\in \gamma_{c+1}(G).
\]
Since $G$ is residually finite, there exists a normal subgroup $M\trianglelefteq G$ of finite index such that $x\notin M$. Set
\[
N=\bigcap_{i=0}^{p-1}\varphi^{i}(M).
\]
By Proposition~\ref{prop:quotient}, the subgroup $N$ is normal, of finite index, and $\varphi$-invariant. Moreover, $x\notin N$, since $N\leq M$.

Again by Proposition~\ref{prop:quotient}, the automorphism $\varphi$ induces a splitting automorphism $\overline{\varphi}$ on the finite quotient $G/N$. Since $G$ is a $p'$-group, the quotient $G/N$ is also a finite $p'$-group. We now show that $\overline{\varphi}$ is fixed-point-free.

Let $yN\in C_{G/N}(\overline{\varphi})$. Then
\[
(yN)^{\overline{\varphi}}=yN,
\]
and hence
\[
(yN)^{\overline{\varphi}^i}=yN
\qquad \text{for all } i=0,1,\dots,p-1.
\]
Since $\overline{\varphi}$ is a splitting automorphism of $G/N$, we have
\[
(yN)(yN)^{\overline{\varphi}}(yN)^{\overline{\varphi}^2}\cdots (yN)^{\overline{\varphi}^{p-1}}=N.
\]
Therefore,
\[
(yN)^p=N.
\]
As $G/N$ is a finite $p'$-group, it contains no nontrivial element of order dividing $p$. It follows that $yN=N$. Thus
\[
C_{G/N}(\overline{\varphi})=1,
\]
so $\overline{\varphi}$ is fixed-point-free.

By \cite{higman}, the finite group $G/N$ is nilpotent of class at most $c$. Hence
\[
\gamma_{c+1}(G/N)=1,
\]
and therefore
\[
\gamma_{c+1}(G)\leq N.
\]
But $x\in \gamma_{c+1}(G)$ and $x\notin N$, a contradiction. This proves that
\[
\gamma_{c+1}(G)=1.
\]
Consequently, $G$ is nilpotent of class at most $c$, as required.
\end{proof}

Clearly, the proof of Corollary \ref{cor:sozutov-rf} follows immediately from Theorem \ref{thm:bounded-class} and the proof of Theorem \ref{no-tarski} follows from Proposition \ref{prop:tarski-no-automorphism}.

\section*{Acknowledgments}

We gratefully dedicate this paper to the memory of Otto H. Kegel, who guided all three of us with remarkable generosity, wisdom and mathematical vision. His encouragement and example left a deep mark on our work and on our lives as group theorists.

The second author would also like to thank Nesin Mathematics Village, where part of this work was carried out, for its hospitality and excellent research atmosphere.

The first and third authors were supported by UNIPA-FFr 2026. The second author was supported by Next Generation EU (NRRP) project ISP5G+, program SERICS, Investment 7PE00000014- CUP D33C22001300002.

\bibliographystyle{abbrv}
\bibliography{references}

\end{document}